\theoremstyle{plain}
\newtheorem{Lemma}{Lemma}
\newtheorem{Proposition}[Lemma]{Proposition}
\newtheorem*{Theorem*}{Theorem}
\theoremstyle{definition}
\newtheorem{Definition}[Lemma]{Definition}
\theoremstyle{remark}
\newtheorem{Remark}[Lemma]{Remark}
\newcommand{\R}{\mathbb{R}}     
\newcommand{\HH}{\mathbb{H}}
\newcommand{\flatC}{\mathtt{fC}}
\newcommand{\fullC}{\mathtt{C}}
\newcommand{\disk}{\mathbb D}
\newcommand{\vcone}{\mathtt{vC}}
\title[Intrinsic rectifiability via flat cones in the Heisenberg group]{Intrinsic rectifiability via flat cones\\ in the Heisenberg group}
\author[Julia]{Antoine Julia}
	\address[A.~Julia]{Dipartimento di Matematica ``T.Levi-Civita'', via Trieste 63, 35121 Padova, Italy.}
	\email{ajulia@math.unipd.it}
\author[Nicolussi~Golo]{Sebastiano Nicolussi Golo}
	\address[S.~Nicolussi~Golo]{Dipartimento di Matematica ``T.Levi-Civita'', via Trieste 63, 35121 Padova, Italy.}
	\email{sebastiano2.72@gmail.com}
\thanks{
	Both A.J.~and~S.N.G.~have been supported
	by University of Padova STARS Project ``Sub-Riemannian Geometry and Geometric Measure Theory Issues: Old and New'';
	and by the INdAM – GNAMPA Project 2019 ``Rectifiability in Carnot groups''.}
\subjclass[2010]{%
	53C17,
	28A75,
	22E25
	}
\keywords{%
	Sub-Riemannian Geometry, %
	Heisenberg Group, %
	Intrinsic Lipschitz graphs.
	}
\date{\today}
\begin{document}
\begin{abstract}
We give a geometric criterion for a topological surface in the first Heisenberg group to be an intrinsic Lipschitz graph, using planar cones instead of the usual open cones.
\end{abstract}
\maketitle

\section*{Introduction}
We identify the \emph{first Heisenberg group} $\HH$ with the manifold $\R^3$ endowed with the group operation 
\[
(x,y,z) (x',y',z') = \left(x+x',y+y', z+z' + \frac{xy' - x'y}{2}\right).
\]
The inverse of $(x,y,z)$ is $(-x,-y,-z)$.
If $E\subset\HH$ and $p\in\HH$, we write $pE$ for the set $\{pe:e\in E\}$.
The one-parameter family of group automorphisms $\delta_\lambda:(x,y,z)\mapsto(\lambda x,\lambda y,\lambda^2 z)$
are called \emph{dilations}.

In recent years, 
intrinsic Lipschitz graphs in $\HH$ have gained more attention,
see \cite{MR2287539,MR3511465,MR3642743} and \cite{2018arXiv181013122F,2017arXiv170808444C,zbMATH06994781,2019arXiv191200493D}. Indeed, these graphs yield a robust notion of rectifiable set in the Heisenberg group, and more generally in Carnot groups. 
The definition of intrinsic Lipschitz graphs is inspired by the euclidean characterization of Lipschitz graphs by cones. 
Here, a \emph{cone} is a set $E\subset\HH$ with $\delta_\lambda(E)=E$ for all $\lambda>0$.
A set is an intrinsic Lipschitz graph if it has the following full cone property.
\begin{Definition}[Full cone property]
	A set $S\subset \HH$ has the \emph{full cone property} 
if there is an open cone $C$ with $-C=C$, $C\cap \{z=0\}\neq\emptyset$ and, for all $p\in S$, 
	\[
	C\cap p^{-1}S = \emptyset .
	\]
\end{Definition}
An example of open cone, for some $\alpha>0$, is the \emph{$\alpha$-full cone}:
\[
        \fullC(\alpha) 
        :=  \left \{(x,y,z):\ |y|<\alpha|x|,\ |z|< \alpha x^2/2 \right\}.
\]
In this paper, we consider
\emph{flat cones}
	\[
	\flatC(\alpha) := \{(x,y,0):|y| < \alpha|x|\} . 
	\]
Our aim is to 
compare the full cone property with a flat cone property defined as follows.
\begin{Definition}[Flat cone property]
	A set $S\subset\HH$ has the \emph{$\alpha$-flat cone property} if for all $p\in S$
	\[
	\flatC(\alpha)\cap p^{-1}S =  \emptyset.
	\]
\end{Definition}
Note that, 
if $C$ is an open cone with $-C=C$ and $C\cap \{z=0\}\neq\emptyset$, then there exists $\alpha>0$ such that $\fullC(\alpha)\subset C$, up to a rotation around the $z$ axis.
Recall that rotations around the $z$ axis are isomorphisms of the homogeneous structure of $\HH$ given by left-translations and dilations.
Noting that $\flatC(\alpha)=\fullC(\alpha)\cap \{z=0\}$, it is then clear that an intrinsic Lipschitz graphs also have the flat cone property, up to a rotation around the $z$-axis. 
We will prove the inverse implication for topological surfaces.
\begin{Theorem*}
	If $S\subset\HH$ is a topological surface with the flat cone property,
	then it has locally the full cone property. 
	Quantitatively, the $\alpha$-flat cone property implies locally the $\alpha/4$-full cone property.
\end{Theorem*}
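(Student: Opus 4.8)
The plan is to argue by contradiction after reducing to the origin. Both the flat cone property and the full cone property are invariant under left translations (replacing $S$ by $gS$ replaces each $p^{-1}S$ by a conjugate of itself), so to establish the local $\alpha/4$-full cone property at a base point we may left-translate that point to $0\in S$. A failure then produces points $q=(u,v,w)\in S$ arbitrarily close to $0$ with $p^{-1}q=q\in\fullC(\alpha/4)$, that is $|v|<\tfrac{\alpha}{4}|u|$ and $|w|<\tfrac{\alpha}{8}u^2$; using $-\fullC(\alpha/4)=\fullC(\alpha/4)$ we may assume $u>0$. If $w=0$ then $q=(u,v,0)$ satisfies $|v|<\tfrac{\alpha}{4}u<\alpha u$, so $q\in\flatC(\alpha)\cap S$, contradicting the flat cone property at $0$; hence I may assume $w>0$.

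The key reformulation is that the third coordinate of $p^{-1}p'$, for $p=(a,b,c)$ and $p'=(a',b',c')$, is the antisymmetric defect
\[
D(p,p') := (c'-c)-\tfrac12\,(ab'-a'b).
\]
Thus $p^{-1}p'\in\flatC(\alpha)$ if and only if $D(p,p')=0$ together with $|b'-b|<\alpha|a'-a|$, and the $\alpha$-flat cone property is equivalent to: no two distinct points of $S$ have vanishing mutual defect while their $xy$-projections lie in the relative bowtie of opening $\alpha$. For the fixed violating point $q$, the map $p\mapsto D(p,q)$ is affine in $p$, its zero set is the plane $\pi_q:=q\{z=0\}$, and its value at the origin is $D(0,q)=w>0$. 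Introducing the open vertical bowtie-cylinder $U_q:=\{(a,b,c):|v-b|<\alpha|u-a|\}$ over the planar bowtie of opening $\alpha$ centred at $(u,v)=\pi(q)$, the flat cone property applied to $q$ and any competitor says precisely that $S$ cannot meet $\pi_q$ inside $U_q$ (the vertex $q$ itself lies on $\partial U_q$, so it is excluded).

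It remains to force such a forbidden crossing. One checks $0\in U_q$ (since $|v|<\alpha u$) and that $0$ sits strictly on the side $\{D(\cdot,q)>0\}$. The goal is therefore to exhibit a point of $S\cap U_q$ on the opposite side $\{D(\cdot,q)<0\}$ which is joined to $0$ by a path inside $S\cap U_q$: continuity of the affine function $D(\cdot,q)$ along such a path then yields a point of $S\cap\pi_q\cap U_q$, the desired contradiction. This is exactly where the hypothesis that $S$ is a topological surface is indispensable. I would use that $S$ locally separates a small ball about $0$ into two sides, together with the factor-$4$ slack between the wide opening $\alpha$ of $U_q$ and the narrow opening $\alpha/4$ controlling the position and height of $q$, to guarantee that the surface, which reaches $q\in\pi_q\cap\partial U_q$, must also present points strictly on the negative side of $\pi_q$ while remaining inside the open region $U_q$.

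The main obstacle is precisely this last step. A priori the surface could be tangent to $\pi_q$ at $q$ and remain locally in the closed half-space $\{D(\cdot,q)\ge 0\}$, touching $\pi_q$ only at the degenerate vertex $q\notin U_q$; a naive path from $0$ to $q$ is not enough, since the first zero of $D(\cdot,q)$ along it might occur only at $q$ itself, which is not a genuine violation. Overcoming this requires selecting the competitor point and the connecting curve within $S$ by means of the local two-sidedness of the topological surface, and using the quantitative room from $\alpha$ versus $\alpha/4$ to ensure the crossing of $\pi_q$ happens strictly inside the open cylinder $U_q$ and strictly away from $q$. I expect the careful combination of the separation property with this quantitative slack to be the technical heart of the proof.
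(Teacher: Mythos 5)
Your setup is sound as far as it goes: the reformulation of the flat cone property at $q$ (that $S$ cannot meet $\pi_q=q\{z=0\}$ inside the bowtie cylinder $U_q$), the observation that $0\in U_q$ with $D(0,q)=w>0$, and the disposal of the case $w=0$ are all correct. But the proof stops exactly where the theorem's content begins. The step you defer --- producing a point of $S$ on the side $\{D(\cdot,q)<0\}$ joined to $0$ by a path in $S\cap U_q$, with the crossing forced to occur strictly inside $U_q$ and away from $q$ --- is not a technical detail to be filled in by ``local two-sidedness plus quantitative slack''; it is the whole theorem, and the tangency scenario you describe is a genuine obstruction to the path argument as stated. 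Note also that your contradiction invokes the flat cone hypothesis at only two points, $0$ and $q$. The paper's proof is not a contradiction argument at all and uses the hypothesis at a full one-parameter family of points of $S$: it first shows via Invariance of Domain that $S$ is an intrinsic graph (Proposition~\ref{prop02251533}), then uses the intermediate value theorem on the graph to find, for every $\eta\in[-\epsilon,\epsilon]$, an actual crossing point $(s\eta,\eta,0)\in S$ of the horizontal plane (Proposition~\ref{prop02221801}); the flat cones excluded at all these points are intersected to produce vertical sets (Lemma~\ref{lem02242245}), and the shears $M_t$ then sweep these out into a full truncated cone via \eqref{eq:vCtofullC}. It is precisely the graph structure over a genuinely two-dimensional domain --- not abstract separation of a ball --- that rules out the tangency pathology; compare the paper's first counterexample $\{(x,0,x^3):x\in[0,1]\}$, which has the flat cone property, passes through $0$, and is tangent in exactly the way you fear, but is only a curve.

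There is a second, independent gap: uniformity. The local full cone property at $o$ demands one neighborhood $U$ that works simultaneously for all base points $p\in S\cap U$. Negating it does not give you a single violating point $q$ near a fixed origin; it gives sequences $p_n,q_n\to o$ with $p_n^{-1}q_n\in\fullC(\alpha/4)$, hence a \emph{sequence} of translated surfaces $p_n^{-1}S$, each containing $0$. Even if your contradiction argument worked for one such surface, the radius on which it operates could a priori shrink as $p_n\to o$. The paper handles exactly this by the compactness argument of Lemma~\ref{lem02251602bis}, which makes all translates and shears $M_t(p^{-1}S)$, for $p$ near $o$ and $|t|\le\alpha/2$, graphs of continuous functions over one fixed rectangle, so that the scale $\epsilon=\min\{\eta_0,\sqrt{\tau_0\alpha/2}\}$ is uniform in $p$ and $t$. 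Your opening sentence (``we may left-translate that point to $0$'') silently identifies the moving base point $p$ with the fixed center $o$ and has no counterpart to this lemma; as written, even a completed version of your pointwise argument would prove a weaker statement than the theorem.
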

\begin{Remark}
	If $S$ is a topological surface with the flat cone property, then it is an intrinsic graph and characteristic lines on $S$ are Lipschitz with uniformly bounded slope. The Theorem 
 is thus a geometric version of the result of \cite{MR3400438} in the first Heisenberg group.
\end{Remark}
\begin{Remark}
	The hypothesis in the Theorem 
 that $S$ be a topological surface is crucial. 
 We register two examples where our strategy fails and that we consider prototypical.
 The first one has the flat cone property, but not the full cone property at $0$ in the direction of the $x$ axis:
\[
 \{(x,0,x^{3}), x\in [\,0,1\,]\}.
\]
 Another example is the following set, which has the flat cone property, but is not an intrinsic Lipschitz graph:
        \begin{equation}\nonumber
          \{(0,0,0)\} \cup \{(x,y,z): x=1\}\backslash\{(1,s,0): |s|<1\}.
        \end{equation}
 However, both sets are clearly intrinsic $1$-codimensional-rectifiable in the sense of  \cite[Definition~3.16]{MR2287539}. We do not know if the flat cone property implies intrinsic rectifiability.
\end{Remark}
We present a detailed proof in the subsequent sections.
However, the expert reader could be satisfied with the following sketch.\\
Suppose $p=0\in S$: we want to find $\beta>0$ and $0\in U\subset \HH$ open such that $\fullC(\beta)\cap (S\cap U) = \emptyset$.
We start by considering the intersection $S\cap H$, where $H=\{z=0\}$ is the so called ``horizontal plane'':
for every $y_0$ close to $0$ there is $x\in[\,- y_0/\alpha, y_0/\alpha\,]$ such that $(x,y_0,0)\in S$.
Now, we notice a crucial fact: all the left-translations 
$\{(x,y_0,0)H\}_{x\in\R}$, with $y_0$ fixed, are affine planes all containing a common line, which turns out to be inside the plane $\{y=0\}$.
Therefore, 
\[
\bigcap_{x\in[\,- y_0/\alpha, y_0/\alpha\,]} (x,y_0,0)\flatC(\alpha)
\]
is the union of two half lines in the plane $\{y=0\}$.
By construction, these two half lines do not intersect $S$.

\noindent If we then consider all $y_0\in(-\epsilon,\epsilon)$ for some $\epsilon>0$ small enough,
we get a family of half lines in $\{y=0\}$ that do not intersect $S$:
the union of such a family yields a ``vertical set'' which, in a neighbourhood of $0$, coincides with $\fullC(\alpha/2)\cap\{y=0\}$.

\noindent Although promising,
this is still not enough to get a full cone.
However, the same argument can be carried out in another system of coordinates $(x',y',z')$ for $\HH$ in which the $x'$-axis is slightly tilted with respect to the $x$-axis;
in other words, we consider $x'=x$, $y'=y+tx$, $z'=z$, where $t\in\R$.
Indeed, for $t$ small, $S$ will still have the $(\alpha/2)$-flat cone property in the new coordinates and thus we obtain a vertical set in $\{y'=0\}$ that does not intersect $S$ and coincides in a neighbourhood of $0$ with $\fullC(\alpha/4)\cap\{y'=0\}$.

\noindent The union of all these vertical sets coincides with $\fullC(\alpha/4)$ near $0$, as desired. This is proved in Section \ref{sec:flat-full}.
Of course, the arguments should be carried out in a uniform way for $p$ close to $0$ in $S$, this is the scope of Section \ref{sec:intrinsic-graphs}.
The full proof of the Theorem is in Section~\ref{sec03131147}.

\subsection*{Acknowledgements}
We thank Davide~Vittone and Roberto~Monti for inspiring conversations.

\section{Flat cones build up full cones}\label{sec:flat-full}
In this section, we show how we get truncated full cones from flat cones.
It is worth mentioning \cite{MR2875838}, where another strategy to pass from flat cones to full open cones is discussed.

	For $r,\beta>0$, define the \emph{truncated full cone}
	\[
	\fullC(\beta,r)
	:= \left \{(x,y,z):\ |y|<\beta|x|,\ |z|< \beta x^2/2,\ |x|< r \right\}
	\]
	and the \emph{vertical set}
	\[
	\vcone(\beta,r) = \left\{\left( x , 0,u x^2/2 \right) 
		: \vert u \vert < \beta, 
                \vert x\vert <r \right\}.
	\]
\begin{Remark}\label{rem:truncated2local}
	Note that if a set $S$ satisfies $\fullC(\beta,r)\cap p^{-1}S = \emptyset$,
	for all $p\in S$, then, for each $o\in S$, the set $S\cap o\{\vert x\vert < r/2\}$ has the $\beta$-full cone property.
\end{Remark}
\begin{Lemma}\label{lem02242245}
	Fix $\epsilon,\beta>0$, then
	\begin{equation}\label{eq02251449}
	 \vcone\left(\frac{\beta}{2},\frac{2\epsilon}{\beta} \right) \subset 
         \bigcup_{\vert \eta\vert < \epsilon\,}
         \bigcap_{\,\vert s\vert<\beta^{-1}} (s\eta,\eta,0) \flatC(\beta).
	\end{equation}
\end{Lemma}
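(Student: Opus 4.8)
The plan is to verify the inclusion pointwise. I would take an arbitrary $q=(x,0,ux^2/2)\in\vcone(\beta/2,2\epsilon/\beta)$, so that $|u|<\beta/2$ and $|x|<2\epsilon/\beta$, and exhibit a single parameter $\eta$ with $|\eta|<\epsilon$ for which $q\in(s\eta,\eta,0)\flatC(\beta)$ holds simultaneously for every $s$ with $|s|<\beta^{-1}$. By left-invariance this is equivalent to asking that $(s\eta,\eta,0)^{-1}q=(-s\eta,-\eta,0)\,q$ lie in $\flatC(\beta)$ for all such $s$, so the whole matter reduces to understanding this one group product.

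The key observation is the following. Computing $(-s\eta,-\eta,0)(x,0,ux^2/2)$, its third coordinate equals $ux^2/2+\eta x/2$, which, crucially, does not depend on $s$: because the $y$-coordinate of $q$ vanishes, the vertical contribution of the translation is insensitive to the shear parameter $s$. I would therefore choose $\eta:=-ux$, so that this third coordinate vanishes identically in $s$ and the translated point lands in the horizontal plane $\{z=0\}$ where $\flatC(\beta)$ lives, \emph{simultaneously} for all $s$. This choice is admissible for the union on the right-hand side, since $|\eta|=|u|\,|x|<(\beta/2)(2\epsilon/\beta)=\epsilon$. This simultaneous landing in $\{z=0\}$ is exactly the geometric mechanism that makes the intersection over $s$ nonempty, and it is the heart of the statement.

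It then remains to check the slope condition defining $\flatC(\beta)$. With $\eta=-ux$ the translated point simplifies to $(x(1+su),\,ux,\,0)$, so membership in $\flatC(\beta)$ amounts to $|ux|<\beta\,|x(1+su)|$, i.e.\ to $|u|<\beta\,|1+su|$ when $x\neq0$. Here the quantitative choice of the constants pays off: from $|s|<\beta^{-1}$ and $|u|<\beta/2$ one gets $|su|<1/2$, hence $|1+su|>1/2$ and $\beta\,|1+su|>\beta/2>|u|$. I expect this uniform-in-$s$ estimate to be the main point requiring care, since the intersection demands a single inequality valid across the entire range of $s$, and it is precisely the factors of $2$ in $\vcone(\beta/2,2\epsilon/\beta)$ that buy the needed slack. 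The only point not reached by this argument is the apex $x=0$, namely the origin, which sits on the boundary of every flat cone and is harmless for the subsequent construction; I would dispose of it separately or simply note that it carries no content.
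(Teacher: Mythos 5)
Your proof is correct and essentially identical to the paper's: the same choice $\eta=-ux$ (admissible since $\vert u\vert\,\vert x\vert<\epsilon$), the same computation reducing membership to $(x(1+su),ux,0)\in\flatC(\beta)$, and the same uniform-in-$s$ estimate via $\vert su\vert<1/2$. Your closing remark about the apex is in fact slightly more careful than the paper, whose final inequality chain is non-strict precisely at $x=0$; note, though, that the origin genuinely belongs to $\vcone(\beta/2,2\epsilon/\beta)$ but not to the right-hand side, so it cannot be ``disposed of'' --- the inclusion should really be read with the apex removed, a harmless edge case for the subsequent construction.
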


\begin{proof}
Pick $(x,0,ux^2/2) \in \vcone(\beta/2,2\epsilon/\beta)$, and notice that $\vert -ux\vert < \epsilon$ . 
We will show that $(x,0,ux^2/2) \in (-sux,-ux,0) \flatC(\beta)$, 
i.~e.~that $(sux,ux,0)(x,0,ux^2/2)\in \flatC(\beta)$,
for all $s\in(-\beta^{-1},\beta^{-1})$. We have
\[
    (sux,ux,0)(x,0,ux^2/2)= (x+sux, ux,0),
\]
so we only need to prove that $\vert ux\vert < \beta\vert x+sux\vert $, which is clear as $\vert u\vert <\beta/2$, $\vert su\vert <1/2$ and thus
\[
\beta\vert x+sux\vert 
\ge \beta \vert x\vert (1-\vert su\vert)
\ge \frac\beta2 \vert x\vert
\ge \vert ux\vert . \qedhere
\]
\end{proof}

For $t\in\R$, define the map $M_t(x,y,z) := (x,y+tx,z)$,
which is a group automorphism $\HH\to\HH$, with inverse $M_{t}^{-1} = M_{-t}$. 
An immediate consequence of the above definitions is that, for all $r, \beta>0$,
\begin{equation}\label{eq:vCtofullC}
\fullC(\beta,r)= \bigcup_{\vert t\vert <\beta} M_t(\vcone(\beta,r)).
\end{equation}

\section{Intrinsic graphs}\label{sec:intrinsic-graphs}
Given $\Omega\subset\R^2$ and $\phi:\Omega\to\R$, the \emph{intrinsic graph} of $\phi$ is the subset 
\begin{align*}
\Gamma_\phi :=& \{ (0,\eta,\tau)(\phi(\eta,\tau),0,0) : (\eta,\tau)\in\Omega \}\\
=& \{ (\phi(\eta,\tau),\eta,\tau -\eta\phi(\eta,\tau)/2) : (\eta,\tau)\in\Omega \}.
\end{align*}
Define the projection $\pi:\HH\to\R^2$, $(x,y,z) \mapsto( y,z+xy/2 )$. 
Notice that if $(\eta_0,\tau_0)=\pi(x_0,y_0,z_0)$, then $(x_0,y_0,z_0) = (0,\eta_0,\tau_0)(x_0,0,0)$.
\begin{Proposition}\label{prop02251533}
	If $S$ is a topological surface with the flat cone property,
	then it is the intrinsic graph of a continuous function defined on an open subset of $\R^2$.
\end{Proposition}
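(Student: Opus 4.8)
The plan is to establish three facts: that the restriction $\pi|_S$ is injective, that its image $\Omega := \pi(S)$ is open, and that the resulting graphing function is continuous. The identity $S=\Gamma_\phi$ will then be immediate from the displayed formula for $\Gamma_\phi$. The guiding observation is that the fibres of $\pi$ are exactly the left cosets of the horizontal $x$-axis subgroup $L := \{(t,0,0):t\in\R\}$: a direct computation shows that $p^{-1}q\in L$ precisely when $\pi(p)=\pi(q)$, and, by the remark preceding the statement, the point of $S$ sitting over $(\eta,\tau)\in\Omega$ is $(0,\eta,\tau)(\phi(\eta,\tau),0,0)$.

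First I would prove injectivity of $\pi|_S$, and this is where the flat cone property enters---essentially the only place it is needed. The key remark is that the punctured axis $L\setminus\{0\}$ lies inside $\flatC(\alpha)$: for $(t,0,0)$ with $t\ne 0$ the defining inequality $|y|<\alpha|x|$ reads $0<\alpha|t|$, which holds. Hence, if $p,q\in S$ satisfy $\pi(p)=\pi(q)$ with $p\ne q$, then $p^{-1}q\in L\setminus\{0\}\subset\flatC(\alpha)$, so $q\in p\flatC(\alpha)$, i.e.\ $\flatC(\alpha)\cap p^{-1}S\ne\emptyset$, contradicting the $\alpha$-flat cone property. Therefore $\pi|_S$ is injective.

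Next I would upgrade injectivity to a homeomorphism onto an open set. Since $S$ is a topological surface and $\pi$ is continuous, $\pi|_S\colon S\to\R^2$ is a continuous injection between $2$-manifolds; by invariance of domain (applied in charts of $S$) it is an open map, hence a homeomorphism onto the open set $\Omega:=\pi(S)$. I then define $\phi\colon\Omega\to\R$ by letting $\phi(\eta,\tau)$ be the first coordinate of $(\pi|_S)^{-1}(\eta,\tau)$; as the composition of the continuous inverse with the coordinate projection, $\phi$ is continuous. Finally, since the unique point of $S$ over $(\eta,\tau)$ is $(0,\eta,\tau)(\phi(\eta,\tau),0,0)$, comparison with the definition of $\Gamma_\phi$ gives $S=\Gamma_\phi$.

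The computations---that the fibres of $\pi$ are cosets of $L$, and that $L\setminus\{0\}\subset\flatC(\alpha)$---are routine, and the flat cone property does all the work for injectivity. The one genuinely nontrivial ingredient, and the only place where the topological-surface hypothesis is used, is the appeal to invariance of domain to conclude that $\Omega$ is open and that $\phi$ is continuous; this is the step I expect to require the most care.
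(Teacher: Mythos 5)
Your proposal is correct and follows essentially the same route as the paper: injectivity of $\pi|_S$ from the fact that the punctured $x$-axis lies in $\flatC(\alpha)$, then invariance of domain (using that $S$ is a topological surface) to get openness of $\pi(S)$ and continuity of $\phi$. You merely spell out details the paper leaves implicit, such as the identification of the fibres of $\pi$ with cosets of $\{(t,0,0):t\in\R\}$.
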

\begin{proof}
	Since $\{(\xi,0,0):\xi\in\R\}\subset\flatC(\alpha)$, the map $\pi|_S:S\to\R^2$ is injective.
	Thus, $S=\Gamma_\phi$ for some $\phi:\pi(S)\to\R$.
	As $S$ is a topological surface and by the Invariance of Domain Theorem, $\pi(S)$ is open and $\phi$ is continuous.
\end{proof}

As we explained in the introduction, 
in order to apply Lemma~\ref{lem02242245},
we will use the fact that for every $y_0$ close to $0$ there is $x\in[\,- y_0/\alpha, y_0/\alpha\,]$ such that $(x,y_0,0)\in S$:
\begin{Proposition}\label{prop02221801}
	Fix $\eta_0,\tau_0,\beta>0$ and let $\psi:[\,-\eta_0,\eta_0\,]\times[\,-\tau_0,\tau_0\,]\to\R$ be continuous, such that $\psi(0)=0$ and $\Gamma_\psi$ has the $\beta$-flat cone property.
	Then, letting $\epsilon:=\min\{\eta_0,\sqrt{\tau_0 \beta}\}$, for all $\eta\in [\,-\epsilon,\epsilon\,]$, there exists $s\in [\,-\beta^{-1},\beta^{-1}\,]$ with
	\begin{equation}\nonumber
	    (s\eta,\eta,0) \in \Gamma_\psi.
	\end{equation}
\end{Proposition}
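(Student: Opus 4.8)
The plan is to use the flat cone property together with a connectedness/intermediate-value argument on the intrinsic graph $\Gamma_\psi$. Let me set up the geometry first. For a fixed $\eta \in [\,-\epsilon,\epsilon\,]$, I want to locate a point of $\Gamma_\psi$ of the form $(s\eta,\eta,0)$; in the intrinsic-graph coordinates this means finding $(\eta,\tau)$ in the domain with $\psi(\eta,\tau)=s\eta$ and such that the $z$-coordinate of the graph point vanishes. Recall that the graph point over $(\eta,\tau)$ is $(\psi(\eta,\tau),\eta,\tau-\eta\psi(\eta,\tau)/2)$, so the condition ``$z=0$'' reads $\tau = \eta\psi(\eta,\tau)/2$. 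Thus I would consider, for fixed $\eta$, the continuous function $g(\tau):=\tau-\eta\psi(\eta,\tau)/2$ on $[\,-\tau_0,\tau_0\,]$ and look for a zero; at such a zero the point is $(\psi(\eta,\tau),\eta,0)$, so automatically $s=\psi(\eta,\tau)/\eta$ if $\eta\neq0$.

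First I would handle the trivial case $\eta=0$: since $\psi(0)=0$, the origin $(0,0,0)\in\Gamma_\psi$ works with any $s$. For $\eta\neq0$ fixed, I would prove $g$ has a zero by the intermediate value theorem, so the key is to control the sign of $g$ at the two endpoints $\tau=\pm\tau_0$. The estimate $|\eta\psi(\eta,\tau)/2|\le \epsilon^2/(2\beta)\le \tau_0/2$ — using $|\eta|\le\epsilon\le\sqrt{\tau_0\beta}$ and the slope bound $|\psi|<\beta^{-1}|\eta|$ coming from the flat cone property, hence $|\psi|\le \epsilon/\beta$ — should force $g(\tau_0)>0$ and $g(-\tau_0)<0$, giving a zero $\tau^\ast$ in the open interval. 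The output point is then $(\psi(\eta,\tau^\ast),\eta,0)=(s\eta,\eta,0)$ with $|s|=|\psi(\eta,\tau^\ast)|/|\eta|<\beta^{-1}$, as required.

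The crucial input I must establish before running the IVT is the \emph{slope bound} $|\psi(\eta,\tau)|\le \beta^{-1}|\eta|$, i.e.~that the $x$-coordinate of any graph point is controlled by the $y$-coordinate. This is exactly where the $\beta$-flat cone property enters: applying the property at the base point $p=(0,0,0)\in\Gamma_\psi$ gives $\flatC(\beta)\cap\Gamma_\psi=\emptyset$ near the origin, and since $\flatC(\beta)=\{(x,y,0):|y|<\beta|x|\}$, a graph point $(\psi,\eta,0)$ lying off this cone forces $|\eta|\ge\beta|\psi|$, which is the desired bound. I expect the main obstacle to be precisely this step: the flat cone property as stated excludes graph points only in the \emph{horizontal} slice $\{z=0\}$, whereas a generic graph point over $(\eta,\tau)$ has $z=\tau-\eta\psi/2\neq0$. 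So the slope bound does not follow directly from the property at a single point; I would instead need to apply the flat cone property at the moving base point produced by the graph itself, or argue that along the zero set of $g$ the points land in the horizontal plane where the exclusion is genuine. Reconciling the two-dimensional parameter $(\eta,\tau)$ with the single scalar constraint ``$z=0$'' — so that the IVT is applied on the correct fiber — is the delicate point, and I would guard against a degenerate situation where $g$ fails to change sign by checking the endpoint estimates carefully against the definition $\epsilon=\min\{\eta_0,\sqrt{\tau_0\beta}\}$.
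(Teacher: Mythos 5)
Your overall framework---the function $g(\tau)=\tau-\eta\psi(\eta,\tau)/2$, the intermediate value theorem in $\tau$, the trivial case $\eta=0$, and the final bound on $s$ obtained from the flat cone exclusion at a zero of $g$---is exactly the paper's. But the step you lean on for the endpoint signs, the global slope bound $|\psi(\eta,\tau)|\le\beta^{-1}|\eta|$, is not merely unjustified: it is false under the hypotheses. The flat cone property constrains a graph point only when its last coordinate vanishes, i.e.\ precisely on the zero set of $g$; off that set, $\psi$ can be arbitrarily large compared with $|\eta|$. Indeed your bound would force $\psi(0,\tau)\equiv 0$, while for instance an intrinsic Lipschitz function such as $\psi(\eta,\tau)=c\sqrt{|\tau|}$ (with $c$ small) satisfies all the hypotheses of the Proposition and violates it. You spotted this obstacle yourself, and one of your two proposed escapes (``argue that along the zero set of $g$ the points land in the horizontal plane where the exclusion is genuine'') is indeed the paper's route---but you did not carry it out, and carrying it out requires one further idea that your proposal does not contain.

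That idea is to control \emph{where zeros of $g$ can lie}, instead of bounding $\psi$. Writing $\zeta(\eta,\tau):=\tau-\eta\psi(\eta,\tau)/2$, if $\zeta(\eta,\tau)=0$ then the graph point over $(\eta,\tau)$ is $(\psi(\eta,\tau),\eta,0)$, so the flat cone exclusion genuinely applies and gives $|\eta|\ge\beta|\psi(\eta,\tau)|$; hence $|\tau|=|\eta\,\psi(\eta,\tau)|/2\le\eta^2/(2\beta)$, i.e.\ the zero set of $\zeta$ is contained in $\{(\eta,\tau):\eta^2\ge 2\beta|\tau|\}$. Since $\epsilon^2\le\beta\tau_0<2\beta\tau_0$, it follows that $\zeta$ cannot vanish anywhere on the two edges $\{(\eta,\pm\tau_0):|\eta|\le\epsilon\}$. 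Now propagate signs along these connected edges starting from $\eta=0$, where $\zeta(0,\pm\tau_0)=\pm\tau_0$: by continuity, $\zeta(\eta,\tau_0)>0>\zeta(\eta,-\tau_0)$ for every $\eta\in[\,-\epsilon,\epsilon\,]$, and only at this point does your intermediate value argument in $\tau$ go through. This connectedness argument is what replaces your false pointwise estimate and closes the proof; without it, the proposal has a genuine gap.
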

\begin{proof}
        Consider the function $\zeta(\eta,\tau):= \tau - \frac{\eta \psi(\eta,\tau)}{2}$ (yielding the third coordinate of the point $(0,\eta,\tau)(\psi(\eta,\tau),0,0)$). 
	Notice that, since $\flatC(\beta)\cap \Gamma_\psi =\emptyset$, we have
        \[
        \{(\eta,\tau): \zeta(\eta,\tau)=0\}\subset\R^2\setminus\pi(\flatC(\beta)) = \{(\eta,\tau) : \eta^2 \ge 2\beta \vert \tau\vert\},
        \]
	Moreover, by the choice of $\epsilon$, for $\eta \in [\,-\epsilon,\epsilon\,]$, there holds $\eta^2 < 2\beta \tau_0$ and therefore $\zeta(\eta,\pm \tau_0)\neq 0$. Since $\zeta$ is continuous,
 and $\zeta(0,\tau_0)=\tau_0>0>-\tau_0 =\zeta(0,-\tau_0)$,  we have for all $\eta\in [\,-\epsilon,\epsilon\,]$
	\begin{equation}
	\zeta(\eta,\tau_0) > 0  >\zeta(\eta,-\tau_0).
	\end{equation}
	Using once again the continuity of $\zeta$, it follows that for every $\eta\in[-\epsilon,\epsilon]$ there is $\tau\in]\,-\tau_0,\tau_0\,[$ with $\zeta(\eta,\tau)=0$ and thus a point $(s\eta,\eta,0) \in \Gamma_\psi$.
By the $\beta$-flat cone property of $\Gamma_\psi$, either $\eta=0$ (and we can take $s=0$) or there holds $\vert s \vert \leq \beta^{-1}$. 
\end{proof}

We want to apply Proposition \ref{prop02221801} and Lemma \ref{lem02242245}, not only to $S$ but also to $M_t(p^{-1}S)$ for $p$ in a neighborhood of $0$ and $t$ in a compact interval.
Lemma \ref{lem02251602bis} allows us to do this.
%

\begin{Remark}\label{rem03111937}
Notice that, for $t\in [\,- \alpha/2, \alpha/2\,]$, there holds 
$\flatC(\alpha/2)\subset M_t \flatC(\alpha)$.
In particular, if $S$ has the $\alpha$-flat cone property, then for such $t$, $M_t(p^{-1}S)$ has the $\alpha/2$-flat cone property, 
for every $t\in [\,- \alpha/2, \alpha/2\,]$ and $p\in\HH$.
%
\end{Remark}


\begin{Lemma}\label{lem02251602bis}
	Let $S\subset\HH$ be a topological surface with the $\alpha$-cone property and $0\in S$.
	Then there are $\eta_0,\tau_0>0$
	such that, defining $V_0:=\pi^{-1}([\,-\eta_0, \eta_0\,] \times [\,-\tau_0,\tau_0\,])$,
	for all $t\in [\,-\alpha/2,\alpha/2\,]$ and $p\in S\cap V_0$,
	there exists a continuous function $\phi_{t,p}: [\,-\eta_0, \eta_0\,] \times [\,-\tau_0,\tau_0\,]\to\R$
	such that 
	\[
	    \Gamma_{\phi_{t,p}} = M_t(p^{-1}S) \cap V_0 .
	\]
\end{Lemma}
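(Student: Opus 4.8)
The plan is to realize every transformed set $M_t(p^{-1}S)$ as an intrinsic graph over one \emph{fixed} rectangle, by feeding Proposition~\ref{prop02251533} into a connectedness argument and then enforcing uniformity in $(t,p)$ through compactness. First I apply Proposition~\ref{prop02251533} to $S$ itself: since $0\in S$ and $\pi(0)=(0,0)$, we obtain $S=\Gamma_\phi$ with $\phi$ continuous on an open set $\Omega\ni(0,0)$ and $\phi(0,0)=0$. Fix an open neighbourhood $S_0$ of $0$ in $S$ whose closure $K$ in $\HH$ is compact and contained in $S$, possible because $S$ is locally closed at $0$, and shrink the eventual $\eta_0,\tau_0$ so that the part of $S$ lying over $\overline{R}$ sits inside $S_0$. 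Because $(t,p,q)\mapsto M_t(p^{-1}q)$ is continuous on the compact set $[\,-\alpha/2,\alpha/2\,]\times K\times K$, the pieces $M_t(p^{-1}S_0)$ all lie in a fixed compact subset of $\HH$; hence their first coordinates, the graph values to come, are bounded by a constant independent of $(t,p)$.

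For each $(t,p)$ with $t\in[\,-\alpha/2,\alpha/2\,]$ and $p\in S_0$, the set $M_t(p^{-1}S)$ is a topological surface through $0$ (indeed $p\in S$ gives $0=M_t(p^{-1}p)\in M_t(p^{-1}S)$) carrying the $\alpha/2$-flat cone property by Remark~\ref{rem03111937}; thus Proposition~\ref{prop02251533} writes it as $\Gamma_{\phi_{t,p}}$ over an open $\Omega_{t,p}\ni(0,0)$ with $\phi_{t,p}$ continuous. Since $\{(\xi,0,0):\xi\in\R\}\subset\flatC(\alpha/2)$, the projection $\pi$ is injective on $M_t(p^{-1}S)$, so once I prove that a fixed rectangle $R:=[\,-\eta_0,\eta_0\,]\times[\,-\tau_0,\tau_0\,]$ satisfies $R\subset\Omega_{t,p}$, every point of $M_t(p^{-1}S)\cap V_0=M_t(p^{-1}S)\cap\pi^{-1}(R)$ is the unique graph point over its projection in $R$, yielding $M_t(p^{-1}S)\cap V_0=\Gamma_{\phi_{t,p}|_R}$ with $\phi_{t,p}|_R$ continuous. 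Everything therefore reduces to the covering statement $R\subset\Omega_{t,p}$.

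For a single $(t,p)$ I would track the smaller open set $\Omega^0_{t,p}:=\pi(M_t(p^{-1}S_0))$, open by the invariance of domain built into Proposition~\ref{prop02251533}, over which the graph values are uniformly bounded. The set $\Omega^0_{t,p}\cap R^\circ$ is open and contains $(0,0)$; I claim it is relatively closed in the connected set $R^\circ$. Given $(\eta_n,\tau_n)\in\Omega^0_{t,p}\cap R^\circ$ with limit $(\eta_*,\tau_*)\in R^\circ$ and graph points $q_n=M_t(p^{-1}r_n)$, $r_n\in S_0$, the uniform bound confines $(q_n)$ to a compact set, so up to a subsequence $q_n\to q_*$ with $\pi(q_*)=(\eta_*,\tau_*)$ and $r_n\to r_*:=p\,M_{-t}(q_*)\in K\subset S$. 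The only obstruction to $r_*\in S_0$ is $r_*\in\partial S_0$, i.e.\ $(\eta_*,\tau_*)\in\pi(M_t(p^{-1}\partial S_0))$; but $0=M_t(p^{-1}p)\in M_t(p^{-1}S_0)$ and injectivity of $\pi$ give $(0,0)\notin\pi(M_t(p^{-1}\partial S_0))$, a closed set, so this cannot happen for $(\eta_*,\tau_*)$ close to $(0,0)$. Hence for $R$ small we get $r_*\in S_0$, so $(\eta_*,\tau_*)\in\Omega^0_{t,p}$, and connectedness yields $R^\circ\subset\Omega^0_{t,p}\subset\Omega_{t,p}$, a final limit covering $\partial R$.

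The main obstacle is to make $R$ \emph{independent} of $(t,p)$, i.e.\ to keep $R^\circ$ disjoint from $\pi(M_t(p^{-1}\partial S_0))$ for all parameters at once. I would argue by compactness: the admissible $p$ form a compact set $K_p\subset S_0$, namely the $\pi$-graph of $\phi$ over $\overline{R}$, so the parameters range over the compact set $[\,-\alpha/2,\alpha/2\,]\times K_p$; if no single $R$ worked, a subsequential limit $(t_n,p_n)\to(t_*,p_*)$ with points $(\eta_n,\tau_n)\to(0,0)$ and $(\eta_n,\tau_n)\notin\Omega_{t_n,p_n}$ would contradict $(0,0)\in\Omega^0_{t_*,p_*}$, once one knows the covering is stable under small perturbations of $(t,p)$ — a degree-theoretic, invariance-of-domain continuity statement. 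It is exactly here that the topological-surface hypothesis is indispensable: there is no quantitative flat-cone barrier keeping the surface out of the solid tube $\pi^{-1}$ of a neighbourhood of $0$ (this is precisely what separates the flat cone property from the full cone property), so the uniform neighbourhood must be extracted from compactness and invariance of domain rather than from the cone condition itself.
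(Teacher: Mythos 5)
Your overall strategy is the same as the paper's (write each $M_t(p^{-1}S)$ as an intrinsic graph via Remark~\ref{rem03111937} and Proposition~\ref{prop02251533}, then use compactness and invariance of domain to force one fixed rectangle into all the domains), and your fixed-$(t,p)$ connectedness argument is sound. But there is a genuine gap at the decisive step: your uniformity argument rests on the assertion that the covering is ``stable under small perturbations of $(t,p)$'', which you name but never prove. That stability claim --- if $(0,0)$ is interior to $\pi(M_{t_*}(p_*^{-1}S_0))$, then a fixed disk about $(0,0)$ remains inside $\pi(M_t(p^{-1}S_0))$ for all $(t,p)$ near $(t_*,p_*)$ --- is precisely the nontrivial content of the lemma, and it is what the paper's proof actually establishes: with a fixed chart $\Phi$ and $f_{t,p}=\pi\circ M_t\circ L_{p^{-1}}\circ\Phi$, it picks $r$ with $\disk_{2r}\subset f_{t_\infty,p_\infty}(\disk_1)$, uses uniform convergence $f_{t_n,p_n}\to f_{t_\infty,p_\infty}$ to get $f_{t_n,p_n}(\partial\disk_1)\cap\disk_r=\emptyset$, and then injectivity (invariance of domain) plus connectedness to conclude $\disk_r\subset f_{t_n,p_n}(\disk_1)$. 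Without this (or an equivalent) argument, your contradiction sequence $(\eta_n,\tau_n)\to(0,0)$ with $(\eta_n,\tau_n)\notin\Omega_{t_n,p_n}$ contradicts nothing, because the sets $\Omega^0_{t_n,p_n}$ vary with $n$ and openness of the limit set $\Omega^0_{t_*,p_*}$ says nothing about them.

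The irony is that the gap can be closed inside your own framework, and more cheaply than by proving stability: make the \emph{obstruction sets} uniform instead of the coverings. Take $K_p$ to be the closure of a small neighbourhood of $0$ in $S$ contained in $S_0$ (as written, your definition of $K_p$ as the graph over $\overline R$ is circular, since $R$ is being chosen). The map $F(t,p,q):=\pi\bigl(M_t(p^{-1}q)\bigr)$ is continuous on the compact set $[\,-\alpha/2,\alpha/2\,]\times K_p\times(K\setminus S_0)$ and never takes the value $(0,0)$: indeed $\pi(M_t(p^{-1}q))=(0,0)$ forces $M_t(p^{-1}q)=(\xi,0,0)$, which for $\xi\neq0$ violates the $(\alpha/2)$-flat cone property of $M_t(p^{-1}S)$ at its point $0=M_t(p^{-1}p)$, and for $\xi=0$ forces $q=p\in S_0$. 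Hence $\vert F\vert\ge\delta>0$ uniformly, and any rectangle with $\overline R\subset\disk_\delta$ is disjoint from every obstruction set $\pi(M_t(p^{-1}(K\setminus S_0)))$ at once; your connectedness argument in $R^\circ$ then runs verbatim for all $(t,p)$ simultaneously, with no stability statement needed. As submitted, however, the crux of the proof is missing.
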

\begin{proof}
	By Remark~\ref{rem03111937} and Proposition~\ref{prop02251533}, for all $p\in S$ and $t\in[\,-\alpha/2,\alpha/2\,]$,
	$ M_t(p^{-1}S)$ is an intrinsic graph over $\pi(M_t(p^{-1}S))$.
	 
	Denote by $\disk_r$ the disk of radius $r$ in $\R^2$. 
	Let $\Phi:\disk_1\to S$ be a local chart with $\Phi(0,0)=(0,0,0)$.
	For $p\in \Phi(\disk_1)$ and $t\in [\,-\alpha/2,\alpha/2\,]$, define
	\[
	f_{t,p}:\disk_1\to\R^2,
	\qquad
	f_{t,p}(v)=\pi(M_t(p^{-1}\Phi(v)) .
	\]
	Notice that, since $M_t(p^{-1}S)$ is an intrinsic graph, the $f_{t,p}$ are homeomorphisms onto their images.
	Define
	\begin{equation}\label{eq:omega}
		\Omega := \bigcap_{p\in \Phi(\bar{\disk}_{1/2})} \bigcap_{\,\vert t\vert \leq  \alpha/2} f_{t,p}({\disk}_{1}).
   \end{equation}
	
We claim that $\Omega$ is a neighborhood of $(0,0)$.
	We argue by contradiction. 
	Suppose that there exists a sequence  $g_n\to 0$ in $\R^2$, along with $p_n$ in $\Phi(\bar{\disk}_{1/2})$ and $t_n \in [\,-\alpha/2 ,\alpha/2\,]$, with $g_n\notin f_{t_n,p_n}({\disk}_{1})$. By compactness, we can suppose that  $p_n\to p_\infty\in \Phi(\bar{\disk}_{1/2})$ and that $t_n\to t_\infty \in [\,-\alpha/2,\alpha/2\,]$.
	Since the $f_{t,p}$ are homeomorphisms onto their images, there is $r>0$ such that $\disk_{2r}\subset f_{t_\infty,p_\infty}( \disk_{1})$.
	Since $f_{t_n,p_n}\to f_{t_\infty,p_\infty}$ uniformly, there holds $f_{t_n,p_n}(\partial \disk_{1})\cap\disk_r=\emptyset$ for $n$ large enough.
	In particular, as $0\in f_{t_n,p_n}(\disk_1)$, we have $\disk_r\subset f_{t_n,p_n}( \disk_{r_0})$, however, as $g_n\to 0$, for $n$ large enough there also holds $g_n\in\disk_r$, a contradiction.
	This proves the claim.
	
	Finally, if $\eta_0,\tau_0>0$ are such that $[\,-\eta_0, \eta_0\,] \times [\,-\tau_0,\tau_0\,]\subset\Omega$, then we have all the properties stated in the lemma.
\end{proof}

\section{Proof of the Theorem}\label{sec03131147} 
Let $\alpha>0$ and $S\subset\HH$ be a topological surface with the $\alpha$-flat cone property.
We will show that $S$ has locally the $(\alpha/4)$-full cone property, that is, that for every $o\in S$ there is an open neighborhood $U\subset\HH$ of $o$ such that, for all $p\in S\cap U$
\[
   \fullC(\alpha/4)\cap p^{-1}(S\cap U) = \emptyset.
\]
It suffices to work for $o=0$.

By Lemma~\ref{lem02251602bis}, there are $\eta_0,\tau_0>0$
such that, defining $V_0:=\pi^{-1}([\,-\eta_0, \eta_0\,] \times [\,-\tau_0,\tau_0\,])$,
for all $t\in [\,-\alpha/2,\alpha/2\,]$ and $p\in S\cap V_0$,
there exists a continuous function $\phi_{t,p}: [\,-\eta_0, \eta_0\,] \times [\,-\tau_0,\tau_0\,]\to\R$
such that 
\[
    \Gamma_{\phi_{t,p}} = M_t(p^{-1}S) \cap V_0 .
\]
%

Fix $p\in S\cap V_0$, $t\in [\,-\alpha/2,\alpha/2\,]$ and set $\epsilon:=\min\{\eta_0,\sqrt{\tau_0 \alpha/2}\}$.
Applying Proposition \ref{prop02221801} to $\phi_{t,p}$, we obtain for each $\eta\in [\,-\epsilon,\epsilon\,]$ a number $s\in [\,-2/\alpha,2/\alpha\,]$ such that $(s\eta,\eta,0)\in \Gamma_{\phi_{t,p}}$.
Therefore, as $\Gamma_{\phi_{t,p}}\subset M_t(p^{-1}S)$ and since $M_t(p^{-1}S)$ has the $(\alpha/2)$-flat cone property by Remark~\ref{rem03111937}, there holds
\[
	M_t(p^{-1}S) \cap \bigcap_{s\in [\,-2/\alpha,2/\alpha\,]} (s\eta,\eta,0)\flatC(\alpha/2) = \emptyset ,
	\qquad \forall\eta\in [\,-\epsilon,\epsilon\,] .
\]
Taking the union over $\eta\in [\,-\epsilon,\epsilon\,]$ and
applying Lemma~\ref{lem02242245} 
yields
\begin{equation}\label{eq03111906}
   M_t(p^{-1}S)\cap \vcone(\alpha/4,4\epsilon/\alpha) = \emptyset,
\end{equation}
which holds for all $p\in S\cap V_0$ and $t\in [\,-\alpha/2,\alpha/2\,]$.
Apply $M_{-t}$ and the left translation by $p$ to  \eqref{eq03111906},
then take a union over $t\in [\,-\alpha/2,\alpha/2\,]$ and use \eqref{eq:vCtofullC} to get
\[
     S \cap p\,\fullC(\alpha/4,4\epsilon/\alpha)= \emptyset ,
\]
which holds for all $p\in S\cap V_0$.
Remark \ref{rem:truncated2local} implies that $S\cap V_0$ has the $(\alpha/4)$-cone property in a neighborhood of $0$.
Thus, $S$ has locally the $(\alpha/4)$-cone property.
%
\qed

\bibliographystyle{abbrv}
\bibliography{RefsCarnot}

\end{document}